\journal{ }
\def\R{{\mathbb R}}
\def\N{{\mathbb N}}
\def\dee{{\rm d}}
\def\:{{\colon}}
\numberwithin{equation}{section}
\newtheorem{theorem}{Theorem}[section]
\newtheorem{corollary}[theorem]{Corollary}
\newdefinition{remark}{Remark}[section]
\newdefinition{definition}{Definition}[section]
\newdefinition{example}{Example}[section]
\newcommand{\be}{\begin{equation}}
\newcommand{\ee}{\end{equation}}
\newcommand{\ba}{\begin{eqnarray}}
\newcommand{\bs}{\begin{eqnarray*}}
\newcommand{\ea}{\end{eqnarray}}
\newcommand{\es}{\end{eqnarray*}}
\newcommand{\bi}{\begin{itemize}}
\newcommand{\ei}{\end{itemize}}
\newcommand{\A}{{\alpha}}
\newcommand{\eps}{{\varepsilon}}
\newcommand{\lam}{{\lambda}}
\newcommand{\Lap}{\Delta}
\newcommand{\fLap}{\Delta_{\alpha}}
\newcommand{\B}{{\beta}}
\newcommand{\Is}{\int_{\R^n }}
\newcommand{\fp}{\tilde{f}}
\newcommand{\fs}{{F}}
\newcommand{\fps}{{f}}
\newcommand{\pc}{{p_{\alpha}}}
\newcommand{\F}{{\mathscr F}}
\begin{document}


\begin{frontmatter}

\author[RL]{R.\ Laister}
\ead{Robert.Laister@uwe.ac.uk}
\address[RL]{Department of Engineering Design and Mathematics, \\ University of the West of England, Bristol BS16 1QY, UK.}
\author[warsaw]{M.\ Sier{\.z}\polhk{e}ga}
\ead{M.Sierzega@mimuw.edu.pl}
\address[warsaw]{Faculty of Mathematics, Informatics and Mechanics,  University of Warsaw,\\  Banacha 2, 02-097 Warsaw, Poland.}
%

\title{A Blow-up Dichotomy for Semilinear Fractional  Heat Equations }


\begin{abstract}
We  derive a blow-up dichotomy  for positive solutions of  fractional  semilinear heat equations on the whole space.
That is, within a certain class of convex  source terms, we establish a necessary and sufficient
 condition on the source for all positive  solutions to become unbounded in finite time.
  Moreover, we show that this condition is equivalent to blow-up of all positive solutions of a closely-related scalar ordinary differential equation.

\end{abstract}

\begin{keyword}
 fractional Laplacian\sep semilinear\sep heat equation \sep global existence \sep   blow-up \sep dichotomy.
 \MSC[2010]{35A01, 35B44, 35K58, 35R11 }
\end{keyword}

\end{frontmatter}


\section{Introduction}
In this paper we investigate the local and global existence  properties of positive solutions of  fractional semilinear heat equations of the form
\begin{equation}\label{nhe}
u_t=\fLap u+f(u),\qquad u(0)=\phi\in L^{\infty}(\R^n),
\end{equation}
where  $\fLap =-\left(-\Lap \right)^{{\alpha}/{2}}$  denotes   the fractional Laplacian operator  with $0<\alpha\le 2$ 
and $f$ satisfies the monotonicity condition
\bi
\item[{\bf {\bf  (M)}}] $f\colon [0,\infty)\to [0,\infty)$ is locally Lipschitz continuous, non-decreasing and $f(0)=0$.
\ei
We present a new dichotomy result  for convex nonlinearities $f$ satisfying the  ODE blow-up criterion
\bi
\item[{\bf  (B)}] $\displaystyle{\int_{1}^{\infty}\frac{1}{f(u)}\, \dee  u <\infty ,}$
\ei
together with  an additional, technical assumption  {\bf  (S)}  (see Section 2).  Specifically, for this class of nonlinearities, we show that all positive solutions of (\ref{nhe}) blow-up in $L^{\infty}(\R^n)$ in finite time {\em if and only if} 
\be
\int_{0^+}\frac{f(u)}{u^{2+\alpha /n}}\, \dee  u =\infty  .\label{intzero}
\ee
Furthermore, we  establish an equivalence between finite time blow-up of all positive  solutions of \eqref{nhe} and  finite time blow-up of all positive solutions of 
 the  scalar, non-autonomous ODE
\be
x'=f(x)-\left(\frac{n}{\alpha t}\right)x,\qquad x(t_0)=x_0>0.\label{introODE}
\ee
To the best of our knowledge  this kind of blow-up equivalence,  between the PDE \eqref{nhe} and a scalar ODE such as (\ref{introODE}), has not been established before. 

We will refer to the  phenomenon of blow-up in  finite time of  all non-negative, non-trivial   solutions of (\ref{nhe}) simply as the `{\it blow-up property}'. We will also 
identify the phrase `non-negative, non-trivial solution' synonomously with `positive solution'.

For the case of classical diffusion ($\alpha =2$) it has long been known  that for $f$ convex   and sufficiently large  initial data $\phi$,  blow-up in \eqref{nhe} occurs; 
  see  \cite[Theorem 17.1]{QS2} for bounded domains and  the whole space alike. The central question then was whether  diffusion  could prevent blow-up  for initial  data sufficiently small.
For general continuous sources $f$, this problem is highly non-trival  and remains open. However, under further restrictions on the form of the nonlinearity there has been significant progress, for example when  $f$ is the power law nonlinearity $f(u)=u^{p}$. In \cite{Fuj66}  a  threshold phenomenon was established, whereby the (Fujita) critical exponent, given by $p_F=1+{2 }/{n}$, separated two regimes:
 for $1<p<p_F $  \eqref{nhe} has the blow-up property, whereas for $p> p_F $ it is possible to find  \emph{small} initial
  conditions $\phi$ evolving into global-in-time solutions. Non-existence of positive global solutions in the delicate critical case $p=p_F $ was later established in \cite{Hay}
   for the case $n\le 2$ and subsequently by \cite{Sug} for all $n\ge 1$. Thus was obtained the first blow-up dichotomy for  \eqref{nhe}{\color{blue}:} in the special case 
   $f(u)=u^p$ and $\alpha =2$,  \eqref{nhe} has the blow-up property if and only if $1<p\le p_F $.  Some slight generalisations can also be found in  \cite{Fuj70,FW}.
In fact the result obtained in  \cite[Theorem]{Sug}  extended previous work  on blow-up in two important ways:  firstly to convex sources terms $f$, and secondly to {\em fractional} diffusion operators.  Specifically, it was shown  for convex $f$  satisfying {\bf  (M)} and {\bf  (B)}, that if 
\be
\lim_{u\to 0}\frac{f(u)}{u^{\pc}}>0, \qquad {\text {where}}\quad \pc :=1+\frac{\A}{n},\label{eqn:pc}
\ee  
then  (\ref{nhe}) has the blow-up property. In fact it is easy to see from the proofs in \cite{Sug} that (\ref{eqn:pc}) need only hold in the  limit inferior sense. 
 There are many other works  which consider the global and blow-up solution properties of nonlinear  fractional diffusion equations,
  all assuming either a power law nonlinearity or a convex one bounded below by a power law near zero as in (\ref{eqn:pc}); see   e.g., \cite{BLW,HKN,IKK,MQ,NS}.

 Subsequently it was shown  in \cite{KST}, in the special case of classical diffusion ($\A =2$), that  condition (\ref{eqn:pc})  is not necessary in order  for (\ref{nhe}) to  have the blow-up property;  this can be seen via  the example in \cite[Section 5]{KST} where a logarithmic-type correction of the critical Fujita case is considered.  In that work it was shown
  (\cite[Theorem~4.1]{KST})  that 
  (\ref{nhe})  has the blow-up property if $f$ is continuous on $[0,\infty )$, positive  on $(0,\infty )$, $f(0)=0$,  {\bf  (B)} and  (\ref{intzero}) hold and $f$ satisfies 
  a further technical condition (labelled (B.3) in \cite{KST}). In particular, neither monotonicity nor  convexity of $f$ were required. On the other hand, 
  this blow-up result is restricted to the  case $\A =2$ and their technical condition  (B.3) still  imposes a certain logarithmic scaling bound near zero; see Section 4 later on for more details.  Conversely, when (\ref{intzero}) fails the authors in \cite{KST}  go on to prove a global-in-time existence result for small initail data, subject to stronger regularity and monotonicity conditions on $f$.  
   
An important aspect of this paper is that we demonstrate (via an explicit construction) that, for all $\A\in (0,2]$,   there exist    monotone, convex  $f$ for which 
(\ref{nhe})  has the blow-up property, but for which the results in  \cite{Sug} and \cite{KST}) do not apply.
  
  The remainder of the paper is organised as follows. In  Section 2 we prove that, for a suitable class of sources $f$, (\ref{intzero}) is sufficient for the ODE (\ref{introODE}) to have the blow-up property.  In  Section 3 we show for this class that if the ODE (\ref{introODE}) possesses the blow-up property then so too does (\ref{nhe}). In Section 4 we present a construction 
  which demonstrates  that our  assumption {\bf  (S)} (stated below)  is strictly weaker than (B.3) of \cite{KST} in the case $\A =2$. We then establish in Section 5  the necessity of  (\ref{intzero})  for  (\ref{nhe}) to have the blow-up property and conclude with some remarks in Section 6.


\section{Blow-up of a Related  ODE}


Here we consider the  blow-up properties of the  non-autonomous ODE
\be
x'=f(x)-\left(\frac{n}{\alpha t}\right)x,\qquad x(t_0)=x_0>0,\quad t_0>0.\label{ODE}
\ee

\begin{definition}
Suppose $f$ satisfies {\bf  (M)}. We say that the  ODE (\ref{ODE})  has the  \emph{blow-up property} if the  solution of (\ref{ODE}) blows-up  in finite time for every  $x_0>0$ and $t_0>0$.

\label{def:odegep}
\end{definition}

We now introduce some further hypotheses:
\bi
\item[{\bf {\bf  (C)}}] $f$ is  convex on $(0,\infty )$;
\item[{\bf  (S)}]  there exist $c_0, \mu_0>0$ and $g\colon (\mu_0,\infty )\to (0,\infty )$ such that $\int_{\mu_0}^{\infty}1/g(s)\, \dee  s  <\infty$ 
and 
\bs 
f(\lam\mu )\ge g(\mu )f(\lam )\quad \text{for all}\quad \mu \ge \mu_0\quad \text{and}\quad \lam\mu\in (0,c_0 ).\es
\ei

\begin{remark}
\bi
\item[]
\item[(i)] If   $f(u)/u^{p}$ is non-decreasing near zero, on $(0,c_0)$ say, for some   $p >1$, then  {\bf  (S)} holds with  $g(\mu )=\mu^{p}$. To see this,
observe that for any choice of $\mu_0 \ge 1$ we have, for $\mu\ge\mu_0$ and  $\lam\mu\in (0,c_0 )$,
\bs
\frac{f(\lam\mu )}{(\lam\mu)^p}\ge \frac{f(\lam)}{\lam^p}.
\es
Hence $f(\lam\mu )\ge { \mu^{p}}f(\lam)$ for all  $\mu\ge\mu_0$ and  $\lam\mu\in (0,c_0 )$.  

  The particular,  homogeneous, Fujita-critical case where  $g(\mu )=\mu^{p}$ and 
$p =p_F $ was considered in \cite[(B.3)]{KST} on a {\em strictly larger} $\lam$-$\mu$ region than appears in {\bf  (S)}; i.e., the condition imposed upon $f$ in \cite{KST}  is a more restrictive one than that in {\bf  (S)}.

We mention also that a  condition such as  $f(u)/u^{p}$ being non-decreasing  was  used in \cite{BaC}, although there the condition at  infinity was relevant rather than near zero.


\item[(ii)] It is  easy to verify that if   $ 0\neq f\in C^1$  satisfies {\bf  (M)} and {\bf  (C)} and the condition
\be
\liminf_{u\to 0}\frac{uf^\prime (u)}{f(u)}>1,\label{eq:liminf}
\ee
then  there exists a $p >1$ such $f(u)/u^{p}$ is non-decreasing near zero. Consequently $f$ satisfies {\bf  (S)} by  (i) above.
 Note that for  $ 0\neq f\in C^1$  satisfying {\bf  (M)} and {\bf  (C)}, we always have  $uf^\prime (u)/f(u)\ge 1$ for all $u> 0$.
\label{rem:BUP}
\ei
\end{remark}


\begin{theorem}\label{thm:BUPode}
Suppose $f$ satisfies {\bf  (M)},  {\bf  (C)}, {\bf  (B)} and {\bf  (S)}. If
\be
\int_{0^+}\frac{f(u)}{u^{2+\alpha /n}}\, \dee  u =\infty  ,\label{B2inf}
\ee
then the ODE  (\ref{ODE}) has the   blow-up property.
\end{theorem}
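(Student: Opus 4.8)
The plan is to argue by contradiction, showing that a globally defined positive solution is impossible; combined with an a priori positivity bound this yields blow-up for every $x_0>0$, $t_0>0$. Write $\B:=n/\A$, so that \eqref{ODE} reads $x'=f(x)-(\B/t)x$, and note $1/\B=\A/n$, $\pc=1+1/\B$ and that the exponent in \eqref{B2inf} is $2+\A/n=1+\pc$. The single substitution driving everything is $w(t):=t^{\B}x(t)$, for which a direct computation gives $w'=t^{\B}f(x)\ge 0$ and hence $w'/w=f(x)/x$. Thus $w$ is nondecreasing, which immediately yields $x(t)\ge(t_0/t)^{\B}x_0>0$: solutions remain strictly positive and cannot vanish in finite time, so the only alternative to finite-time blow-up is global existence on $[t_0,\infty)$. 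I assume this and seek a contradiction.

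I then set up a nullcline dichotomy, using convexity {\bf (C)} only through the fact that $f(x)/x$ is nondecreasing; let $\ell:=\lim_{x\to0^+}f(x)/x\ge 0$. Since $x'>0\iff f(x)/x>\B/t$, two cases arise. If $\ell>0$, then for any fixed $t_2>\B/\ell$ one has $x'\ge f(x)-(\B/t_2)x>0$ for every $x>0$; as $f(x)-(\B/t_2)x\sim f(x)$ at infinity, hypothesis {\bf (B)} makes $\int^{\infty}\dee\xi/\bigl(f(\xi)-(\B/t_2)\xi\bigr)$ finite, forcing blow-up shortly after $t_2$ and contradicting globality. Hence I may assume $\ell=0$, so the nullcline $x_c(t)$ defined by $f(x)/x=\B/t$ exists for all $t$ and (again by monotonicity of $f(x)/x$) is nonincreasing in $t$. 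Were the trajectory ever strictly above $x_c$, it would stay above ($x$ increasing while $x_c$ decreases) and {\bf (B)} would once more give finite-time blow-up; therefore globality forces $x'(t)\le 0$ for all $t$. Consequently $x\searrow x_\infty\ge0$, and $x_\infty>0$ is impossible since then $x'\to f(x_\infty)>0$; thus $x(t)\searrow 0$.

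The decisive step is a change of variables exploiting $w$. On $[t_0,\infty)$ the solution $x$ is a decreasing bijection onto $(0,x_0]$ with $-x'=(\B/t)x-f(x)>0$, so substituting $x=x(s)$ in the integral of \eqref{B2inf} gives
\be
\int_0^{x_0}\frac{f(x)}{x^{1+\pc}}\,\dee x
=\int_{t_0}^{\infty}\frac{f(x)}{x^{1+\pc}}\Big(\frac{\B x}{s}-f(x)\Big)\,\dee s
\le \int_{t_0}^{\infty}\frac{\B f(x)}{s\,x^{\pc}}\,\dee s .
\ee
The key observation is that the last integrand is an exact derivative: using $s\,x^{1/\B}=w^{1/\B}$ and $f(x)/x=w'/w$ one finds $\B f(x)/(s\,x^{\pc})=\B\,w'\,w^{-1-1/\B}=-\B^2\,(\dee/\dee s)\,w^{-1/\B}$, so the right-hand side equals $\B^2\bigl(w(t_0)^{-1/\B}-w_\infty^{-1/\B}\bigr)<\infty$. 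Hence $\int_{0^+}f(u)\,u^{-(2+\A/n)}\,\dee u$ is finite, contradicting \eqref{B2inf}. Therefore no global solution exists and \eqref{ODE} has the blow-up property.

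I expect the main obstacle to be the borderline regime $\ell=0$ with $x\to0$, where the damping $-(\B/t)x$ competes with the source on exactly the critical scale: the naive lower bound $x\ge(t_0/t)^{\B}x_0$ only reproduces the divergent $\int\dee u/u$ and is too lossy to conclude. The whole point of the $w$-substitution is that it converts this borderline competition into the telescoping identity above, turning a divergent integral into a finite, explicitly computable one. Notably this route leans on {\bf (C)} (through monotonicity of $f(x)/x$) and {\bf (B)} (to blow up trajectories above the nullcline), while {\bf (S)} appears to be dispensable for the ODE itself.
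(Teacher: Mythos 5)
Your proof is correct, and it departs from the paper's argument at precisely the step that matters. Both proofs assume a global solution for contradiction, reduce to the monotone regime (you via forward invariance from any point where $x'>0$, the paper via a crossing-time comparison -- both resting on the monotonicity of $L(u)=f(u)/u$ from {\bf (C)}), conclude $x\searrow 0$, and pass to $w(t)=t^{n/\alpha}x(t)$ (the paper's $y$). There the routes split. The paper bounds $w$ below by its initial value to show, via (\ref{B2inf}), that $w(t)\to\infty$, and then needs hypothesis {\bf (S)} in an essential way: once $w\ge\mu_0$ and $x\le c_0$ it writes $w'\ge t^{n/\alpha}f\bigl(t^{-n/\alpha}\bigr)g(w)$ and contradicts $\int^{\infty}\dee s/g(s)<\infty$. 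You bypass this entirely with the telescoping identity: writing $\beta=n/\alpha$, one checks $w'=s^{\beta}f(x)$ and $w^{-1-1/\beta}=s^{-\beta-1}x^{-\pc}$, so $\beta f(x)/(s\,x^{\pc})=-\beta^{2}\,\frac{\dee}{\dee s}\bigl(w^{-1/\beta}\bigr)$ exactly; discarding the nonnegative term $f(x)^{2}/x^{1+\pc}$ in $(-x')f(x)/x^{1+\pc}$ then yields the bound $\int_{x(T)}^{x_0}f(u)\,u^{-(2+\alpha/n)}\,\dee u\le \beta^{2}t_0^{-1}x_0^{-1/\beta}$, uniformly in $T$, which contradicts (\ref{B2inf}) as $x(T)\searrow 0$. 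I have verified the identity, the chain-rule substitution (which needs only $x\in C^{1}$ nonincreasing, not the ``decreasing bijection'' you claim -- a harmless overstatement, since one integrates $\frac{\dee}{\dee s}\Phi(x(s))$ with $\Phi'(v)=-f(v)/v^{1+\pc}$ continuous on $(0,x_0]$ and lets $T\to\infty$ by monotone convergence), and the preliminary cases; I find no gap. Two remarks. First, and most importantly: your argument nowhere uses {\bf (S)}, which is essential to the paper's proof; since {\bf (S)} enters the dichotomy of Theorem~\ref{thm:dich} only through Theorem~\ref{thm:BUPode}, your proof, if it withstands scrutiny (and I could not fault it), answers affirmatively the question the authors pose as open in their concluding remarks, so that the dichotomy would hold under {\bf (M)}, {\bf (C)}, {\bf (B)} alone -- this is a genuine strengthening and worth flagging explicitly rather than as an afterthought. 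Second, minor polish: the nullcline discussion is dispensable (your operative claim is just that $x'(t_*)>0$ propagates forward as $x'\ge\delta x$ with $\delta=L(x(t_*))-n/(\alpha t_*)$, by monotonicity of $L$), and where you use $f(x_\infty)>0$ and invoke {\bf (B)} you should record that $f>0$ on $(0,\infty)$, which is forced by {\bf (M)} together with (\ref{B2inf}), as the paper itself notes at the start of its proof.
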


\begin{proof}
Suppose, for contradiction, that there exists a global solution $x(t)$ of (\ref{ODE}). By ODE uniqueness it is clear that the solution of (\ref{ODE}) is positive for all $t\ge t_0 $. By {\bf  (M)} and (\ref{B2inf}),  $f>0$ on $(0,\infty)$ and  by {\bf  (C)},   $L (u):=f(u)/u$ is non-decreasing and positive for $u>0$.

Suppose first that $x$ is bounded away from zero, i.e., there exists  $\eps >0$ such that $x(t)\ge \eps$ for all $t\ge t_0 $.  By monotonicity of $L$,  $L (x(t))\ge L(\eps)>0$ for all $t\ge t_0  $.  Hence there exists $t_1> t_0 $ such that
\bs
1-\left(\frac{n}{\alpha t}\right)\frac{1}{L(x(t))}\ge 1-\frac{n}{\alpha L(\eps)t}\ge 1/2 \es
for all $t\ge t_1$. For such $t$ we  have
\bs
x'=f(x)\left( 1-\left(\frac{n}{\alpha t}\right)\frac{1}{L(x(t))}\right) \ge  f(x)/2,
\es
and so by {\bf  (B)} $x$  blows up in finite time, a contradiction.

Now suppose  that $x$ does not remain bounded away from zero. We then claim that $x^{\prime}(t)\le 0$ for all $t\ge t_0 $.
For suppose this is not the case, so that  there exists $t_2\ge t_0 $  such that $x^{\prime}(t_2)> 0$. Since $x$ is $C^1$ and not bounded away from zero, there exists  $t_3>t_2$ such that $x^{\prime}(t)> 0$ for all $t\in [t_2,t_3)$ and  $x^{\prime}(t_3)= 0$. Clearly $x(t_2)<x(t_3)$ and so by the monotonicity of
$L$ we have
\bs
0=\frac{x^{\prime}(t_3)}{x(t_3)}= L (x(t_3))-\frac{n}{\alpha t_3}
> {L(x(t_2))}-\frac{n}{\alpha t_2}
=\frac{x^{\prime}(t_2)}{x(t_2)}>0,
\es
which is clearly false. Hence $x^{\prime}(t)\le 0$ for all $t\ge t_0 $ as claimed. It follows that $x(t)$ is non-increasing and $x(t)\to 0$ as ${t\to\infty}$.

Now set $y(t)= t^{n/\alpha }x(t)$ so that $y$ satisfies the ODE
\be
y^\prime =yL\left(yt^{-n/\alpha }\right),\qquad y(t_0)=t_0^{n/\alpha }x_0=\colon  y_0>0.
\label{eq:yODE}
\ee
By (\ref{eq:yODE}), $y$ is clearly increasing and so
\bs
y(t)&= &\exp{\left(  \int_{t_0}^{t} L\left(y(s)s^{-n/\alpha }\right)\,\dee s  \right)} \ge \exp{\left(  \int_{t_0}^{t} L\left(y_0s^{-n/\alpha }\right)\,\dee s  \right)}\\
&= &\exp \left(  \frac{\alpha y_0^{\alpha /n}}{n}\int_{y_0t^{-n/\alpha }}^{x_0} \frac{f(u)}{u^{2+\alpha /n}}\, \dee  u  \right)\to  \infty 
\es
{as} $t\to\infty$, by (\ref{B2inf}).  For $\tau >t_0$ sufficiently large  we can ensure that  $y(t)\ge \mu_0$ and $ t^{-n/\alpha }y(t)=x(t)\le c_0 $ for all $t\ge \tau$.
For such $t$ it follows from {\bf  (S)} that
\bs
y^\prime &=& t^{n/\alpha }f\left(t^{-n/\alpha }y\right)\ge  t^{n/\alpha }f\left(t^{-n/\alpha }\right)g(y)
\es
and so
\bs \int_{y(\tau)}^{y(t)}\frac{\dee y}{g(y)} &\ge & \int_{\tau}^{t} L\left(s^{-n/\alpha }\right)\,\dee s=  \frac{\alpha }{n}\int_{t^{-n/\alpha }}^{\tau^{-n/\alpha }} \frac{f(u)}{u^{2+\alpha /n}}\,\dee u.
\es
Letting $t\to\infty$  and  using  (\ref{B2inf})  we again obtain a contradiction, on  recalling  the integrability of  $1/g$ in  {\bf  (S)}.
\end{proof}


\section{Blow-up of the  PDE}

In this section we show that blow-up of the ODE (\ref{ODE}) implies blow-up of the PDE (\ref{nhe}). We denote by $\left\{ S_{\alpha}(t)\right\}_{t\ge 0}$  the fractional heat semigroup on $L^q(\R^n  )$ ($q\ge 1$) generated by $-\fLap$ on  $\R^n$ with  the explicit representation formula
 \be
 [S_{\alpha}(t)\phi](x)=\Is K_{\alpha}(x-y,t)\phi (y)\, \dee  y,\qquad \phi\in L^q(\R^n  ),\label{eq:heatsg}
 \ee
where $K_{\alpha}$ is the (positive) fractional heat kernel. As is commonplace in the study of semilinear problems, we may  then study (\ref{nhe}) via the variation of constants formula
\begin{equation}
u(t)={\mathscr F}(u;\phi)\coloneqq S_{\alpha}(t)\phi+\int_0^t S_{\alpha}(t-s)f(u(s))\, \dee  s.\label{eq:VoC}
\end{equation}

It is  well known  that for 
 any  non-negative initial condition $\phi\in L^{\infty}(\R^n  )\ $ there is a $T_{\phi}>0$ such that  (\ref{nhe})  has a unique non-negative  solution $u$
 which is bounded on $\R^n \times [0,T]$ for any $T\in (0,T_\phi)$, such that if $ T_{\phi} <\infty$ then $\| u(t)\|_{\infty}\to \infty$ as $t\to T_{\phi}$. If $T_{\phi} =\infty$ then we say that $u$ is a  {\em{global solution}} of (\ref{nhe}).

\begin{definition}

Suppose $f$ satisfies {\bf  (M)}.  We say that  the PDE (\ref{nhe}) has the  \emph{blow-up property} if for every non-trivial, non-negative $\phi\in L^{\infty}(\R^n  )$ we have 
$ T_{\phi} <\infty$.
\label{def:pdegep}
\end{definition}

\begin{theorem}
Suppose that $f$ satisfies {\bf  (M)} and {\bf  (C)}. If  the ODE (\ref{ODE}) has the blow-up property then the PDE (\ref{nhe}) has the blow-up property.
\label{thm:blowup}
\end{theorem}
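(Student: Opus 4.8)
The plan is to argue by contradiction: assume that for some non-trivial $\phi\ge 0$ the solution $u$ of (\ref{nhe}) is global, and from this produce a global solution of the ODE (\ref{ODE}), contradicting its blow-up property. First I would reduce to a convenient class of data. Since $S_{\alpha}$ is order-preserving and $f$ is non-decreasing, (\ref{nhe}) obeys a comparison principle, so it suffices to treat $\phi$ radially symmetric, radially non-increasing and (say) compactly supported; standard arguments then show $u(\cdot,t)$ inherits radial monotonicity, so that $\|u(t)\|_{\infty}=u(0,t)$ for every $t$. I would also record the pointwise lower bound $u(\cdot,t)\ge S_{\alpha}(t)\phi$ coming from (\ref{eq:VoC}) and positivity of $f$.

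The heart of the argument is to manufacture, from $u$, a scalar quantity $P(t)$ that is a supersolution of (\ref{ODE}) on $[t_1,\infty)$ for some $t_1>0$. The natural candidate is the peak $P(t)=u(0,t)$ (or, if pointwise control of $\fLap$ turns out awkward, the kernel average $[S_{\alpha}(t)u(t)](0)$). Differentiating (\ref{nhe}) at the origin gives $P'(t)=f(P(t))+\fLap u(0,t)$, so everything hinges on the lower bound $\fLap u(0,t)\ge -(n/\alpha t)\,u(0,t)$. The exact constant $n/\alpha$ is dictated by the self-similarity $K_{\alpha}(x,t)=t^{-n/\alpha}\Phi(xt^{-1/\alpha})$: for the free flow one has the identity $t^{n/\alpha}[S_{\alpha}(t)\psi](0)=\Is \Phi(yt^{-1/\alpha})\psi(y)\,\dee y$, whose right-hand side is non-decreasing in $t$ whenever $\psi\ge 0$ (because $\Phi$ is radially non-increasing), which yields precisely $\fLap[S_{\alpha}(t)\psi](0)\ge-(n/\alpha t)[S_{\alpha}(t)\psi](0)$. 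Using Jensen's inequality for the convex $f$ against the probability kernel $K_{\alpha}(\cdot,t)$, together with this free-flow estimate and the Duhamel representation (\ref{eq:VoC}), I would aim to transfer the bound to the nonlinear solution and so obtain $P'(t)\ge f(P(t))-(n/\alpha t)P(t)$ for $t\ge t_1$.

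With the differential inequality in hand the conclusion is immediate: choosing $x_0=P(t_1)>0$, the solution $x$ of (\ref{ODE}) with $x(t_1)=x_0$ blows up in finite time by hypothesis, while the comparison principle for scalar ODEs gives $P(t)\ge x(t)$; hence $u(0,t)=\|u(t)\|_{\infty}\to\infty$ in finite time, contradicting global existence. I expect the main obstacle to be exactly the transfer step, i.e. establishing the sharp damping $\fLap u(0,t)\ge-(n/\alpha t)u(0,t)$, with constant $n/\alpha$ and not something larger, for the full nonlinear solution. The difficulty is that, unlike the classical case $\alpha=2$, the operator $\fLap$ is non-local, so the usual local maximum-principle estimates are unavailable and one cannot simply bound each Duhamel contribution separately: the naive termwise estimate produces a non-integrable weight $1/(t-s)$ and overcounts the dissipation, while averaging against a spreading kernel tends to overcount it by a fixed factor. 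Controlling this genuinely requires combining convexity (Jensen) with the global self-similar structure of $K_{\alpha}$ and the radial monotone profile of $u$, and this is the technical crux where the hypotheses {\bf (M)} and {\bf (C)} are really used.
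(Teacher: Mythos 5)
Your contradiction scheme is the right shape, but the step you yourself flag as the ``technical crux'' --- the pointwise estimate $\fLap u(0,t)\ge -(n/\alpha t)\,u(0,t)$ for the full nonlinear solution --- is precisely the content of the theorem, and your proposal supplies no mechanism to close it. The free-flow ingredient is fine: monotonicity in $t$ of $t^{n/\alpha}[S_{\alpha}(t)\psi](0)$ for $\psi\ge 0$ does give $\fLap [S_{\alpha}(t)\psi](0)\ge -(n/\alpha t)[S_{\alpha}(t)\psi](0)$. But, as you note, pushing this through the Duhamel formula (\ref{eq:VoC}) term by term produces the weight $(t-s)^{-1}$, which is neither integrable nor comparable to $t^{-1}$, and Jensen's inequality at a fixed time compares $f$ of an average with an average of $f$ --- it gives no control whatsoever on the nonlocal operator $\fLap$ applied to $u$ at the peak. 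So the differential inequality $P'\ge f(P)-(n/\alpha t)P$ for $P(t)=u(0,t)$ is never established, and nothing in your list of tools (radial monotone rearrangement of the data, self-similarity of $K_\alpha$, convexity) visibly reaches it; the pointwise bound may even fail, and in any case the proposal as written is a plan with its central estimate missing rather than a proof.

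The paper avoids pointwise control entirely, and the route is essentially your parenthetical fallback, which you never develop: following Sugitani one first shows $u(\cdot,t_0)\ge c\,K_{\alpha}(\cdot,\tau_0)$, then studies the kernel-weighted average $z(t)=\int_{\R^n}K_{\alpha}(x,t)\,u(x,t+t_0)\,\dee x$. Multiplying the integral equation by $K_{\alpha}(x,t)$ and using the semigroup property turns the Duhamel term into integrals against $K_{\alpha}(\cdot,2t-s)$; the scaling identity $K_{\alpha}(x,t)=t^{-n/\alpha}K_{\alpha}(t^{-1/\alpha}x,1)$ together with radial monotonicity of the kernel gives $K_{\alpha}(y,2t-s)\ge 2^{-n/\alpha}(t/s)^{-n/\alpha}K_{\alpha}(y,s)$ for $s\le t$, and Jensen (this is where \textbf{(C)} actually enters) yields the \emph{integral} inequality $z(t)\ge \kappa t^{-n/\alpha}+\kappa\int_{t_0}^{t}(t/s)^{-n/\alpha}f(z(s))\,\dee s$, so that $z$ is a supersolution of $w'=\kappa f(w)-(n/\alpha t)w$. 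Note this loses a multiplicative constant $\kappa<1$ in front of $f$ --- and here your worry about needing the damping constant ``$n/\alpha$ and not something larger'' is misplaced in an instructive way: the constant $n/\alpha$ in the damping comes out exactly from the kernel scaling, while the harmless loss lands on $f$ and is absorbed by the time rescaling $t\mapsto\kappa t$, under which the term $(n/(\alpha t))x$ in (\ref{ODE}) is invariant; this is exactly why Definition~\ref{def:odegep} demands blow-up for \emph{all} $t_0,x_0>0$. To salvage your write-up, develop the averaged quantity $z(t)$ along these lines (including the initial kernel lower bound, which supplies the positive $\kappa t^{-n/\alpha}$ term needed to start the ODE comparison) instead of chasing the pointwise bound at the peak.
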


\begin{proof}
We proceed  as in the proof of the main theorem in \cite[Section 4]{Sug}. We briefly outline the initial steps of that proof for the reader's convenience.

Suppose, for contradiction, that $u$ is a  non-negative,  global solution of (\ref{nhe}). Then  $u$  satisfies the integral equation
\be
u(x,t)=\int_{\R^n} K_{\alpha}(x-y,t)\phi(y)\, \dee y+\int_0^t \int_{\R^n} K_{\alpha}(x-y,t-s)f(u(y,s))\, \dee y\dee  s.\label{eq:integral}\ee
Clearly $u>0$ for all $t>0$ and so, by translating in time if necessary, we may assume without loss of generality that  $\phi >0$.


   Using the integral formulation (\ref{eq:integral}), positivity of the solution and standard properties of  $K_{\alpha}$, one can then  show that there exist constants
    $c>0$, $\tau_0>0$ and $t_0>0$ such that $u(x,t_0)\ge cK_{\alpha}(x,\tau_0 )$ for all $x\in\R^n$
    (see \cite[p.\,48]{Sug}). It follows that
\bs
u(x,t+t_0 )&= & \int_{\R^n}K_{\alpha}\left(x-y,t\right){  u(y,t_0)}\,\dee y\\
&& \qquad+\int_0^t\int_{\R^n} K_{\alpha}(x-y,t-s)f(u(y,s+t_0))\,\dee y\dee s\\
&\ge & {  c\,K_{\alpha}(x,t+\tau_0)}+\int_0^t\int_{\R^n} K_{\alpha}(x-y,t-s)f(u(y,s+t_0))\,\dee y\dee s.
\es
Setting $v(x,t)=u(x,t+t_0 )$ yields
\be
v(x,t)=   c\,K_{\alpha}(x,t+\tau_0)+\int_{t_0}^t\int_{\R^n} K_{\alpha}(x-y,t-s)f(v(y,s))\,\dee y\dee s.\label{eq:intv}
\ee
Clearly  $v(t)\in L^{\infty}(\R^n)$ for all $t> 0$ since $u$ is assumed to be in $L^{\infty}(\R^n)$ for all $t> 0$. 
Now set
$$z(t)=\int_{\R^n}K_{\alpha}\left(x,t\right)v(x,t)\,\dee x.$$
Evidently  $z(t)$ is positive and finite for all $t> 0$.  Multiplying (\ref{eq:intv})  by $K_{\alpha}(x,t)$, integrating over $\R^n$ and using the semigroup property of $K_\alpha$,  gives
\bs
z(t)&= & k(2t+\tau_0)^{-n/\alpha }+\int_{t_0}^t\int_{\R^n} K_{\alpha}(y,2t-s)f(v(y,s))\,\dee y\dee s.
\es
where  $k=k(n,\alpha ,c)$ is a positive constant. Now using  the scaling property
\bs
K_{\alpha}(x,t)=t^{-n/\alpha }K_{\alpha}(t^{-1/\alpha }x,1)
\es 
of $K_{\alpha}$ (see e.g., \cite[ p.\,46-47]{Sug}) and the fact that $K_{\alpha}(x,t)$
is decreasing in $|x|$, we have for $s\le t$,
\bs
K_{\alpha}(y,2t-s)&=&\left(2t-s\right)^{-n/\alpha }K_{\alpha}\left(\left(2t-s\right)^{-1/\alpha}y,1\right)\\
&\ge & \left(2t-s\right)^{-n/\alpha }K_{\alpha}\left(s^{-1/\alpha}y,1\right)\\
&= &\left(\frac{2t-s}{s}\right)^{-n/\alpha }K_{\alpha}\left(y,s\right)\\
&\ge & 2^{-n/\alpha }\left({t}/{s}\right)^{-n/\alpha }K_{\alpha}\left(y,s\right).
\es
Hence, by Jensen's inequality,
\ba
z(t)
&\ge & k(2t+\tau_0)^{-n/\alpha } +2^{-n/\alpha }\int_{t_0}^t\left({t}/{s}\right)^{-n/\alpha }\int_{\R^n} K_{\alpha}\left(y,s\right)f(v(y,s))\,\dee y\dee s\nonumber \\
&\ge & k(2t+\tau_0)^{-n/\alpha } +2^{-n/\alpha }\int_{t_0}^t \left({t}/{s}\right)^{-n/\alpha }f(z(s))\,\dee s \nonumber\\
&\ge & \kappa {  t^{-n/\alpha }} +\kappa \int_{t_0}^t {  \left(t/s\right)^{-n/\alpha }}f(z(s))\,\dee s \label{departure}
\ea
 for all $t\ge t_1$,    $\kappa <2^{-n/\alpha }\min\{1,k\}$  and $t_1>t_0$ sufficiently large. Here we point out that (\ref{departure}) is a 
departure from the form used in \cite[Equation (4.4)]{Sug} and is the reason for our introduction and analysis of the auxiliary ODE (\ref{ODE}).

 It now follows from (\ref{departure}) that for $t>t_1$, $z$ is  a supersolution of the ODE
\bs
w^\prime = \kappa f(w)-\left(\frac{n}{\alpha t}\right)w.
\es
By rescaling time ($t\mapsto \kappa t$) we see  that $z(t)\ge x(\kappa t)$, where $x$ is the solution of the ODE
  $$x^\prime = f(x)-\left(\frac{n}{\alpha t}\right)x, \qquad x(\kappa t_1)=z(t_1)>0.$$
  By assumption  $x$ (and hence $z$) blows-up in finite time, yielding the required contradiction to our earlier statement that $z(t)$ is finite for all $t>0$.
  \end{proof}

By  Theorem~\ref{thm:BUPode} and  Theorem~\ref{thm:blowup} we obtain the following blow-up result for (\ref{nhe}).

\begin{corollary}\label{cor:BUPpde}
Suppose $f$ satisfies {\bf  (M)},  {\bf  (C)}, {\bf  (B)} and {\bf  (S)}. If
\bs
\int_{0^+}\frac{f(u)}{u^{2+\alpha /n}}\, \dee  u =\infty  ,
\es
then the PDE  (\ref{nhe}) has the   blow-up property.
\end{corollary}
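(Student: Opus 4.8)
The plan is to chain the two preceding results, since the corollary is an immediate consequence of Theorem~\ref{thm:BUPode} and Theorem~\ref{thm:blowup}; essentially no new work is required. First I would observe that the full hypothesis package assumed here, namely {\bf (M)}, {\bf (C)}, {\bf (B)}, {\bf (S)} together with the divergence $\int_{0^+} f(u)/u^{2+\alpha/n}\, \dee u = \infty$, is exactly the hypothesis set of Theorem~\ref{thm:BUPode}. Applying that theorem verbatim therefore yields that the auxiliary non-autonomous ODE (\ref{ODE}) has the blow-up property in the sense of Definition~\ref{def:odegep}, i.e.\ its solution blows up in finite time for every $x_0 > 0$ and $t_0 > 0$.

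Next I would feed this conclusion into Theorem~\ref{thm:blowup}. The hypotheses required there are only {\bf (M)} and {\bf (C)}, which form a subset of what we are assuming, so they hold automatically. Having just established in the previous step that the ODE (\ref{ODE}) possesses the blow-up property, the implication provided by Theorem~\ref{thm:blowup} applies directly and delivers at once that the PDE (\ref{nhe}) has the blow-up property in the sense of Definition~\ref{def:pdegep}. That concludes the argument.

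Because all the substantive analysis has been carried out in the two theorems being composed, I do not expect any genuine obstacle at this stage. The only points deserving (minor) care are purely bookkeeping: one should confirm that Theorem~\ref{thm:blowup} imposes no structural assumption on $f$ beyond {\bf (M)} and {\bf (C)} (so that the extra conditions {\bf (B)} and {\bf (S)} are not silently needed a second time), and that the phrase ``the ODE (\ref{ODE}) has the blow-up property'' appearing as the \emph{conclusion} of Theorem~\ref{thm:BUPode} is literally the same statement that serves as the \emph{hypothesis} of Theorem~\ref{thm:blowup}. Both agree by Definition~\ref{def:odegep}, so the two implications compose cleanly and the corollary follows.
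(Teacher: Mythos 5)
Your proposal is correct and matches the paper exactly: the corollary is stated there as an immediate consequence of Theorem~\ref{thm:BUPode} (which yields the blow-up property for the ODE (\ref{ODE}) under {\bf (M)}, {\bf (C)}, {\bf (B)}, {\bf (S)} and the divergent integral) composed with Theorem~\ref{thm:blowup} (which needs only {\bf (M)} and {\bf (C)} to transfer blow-up from the ODE to the PDE). Your bookkeeping checks, including that the two theorems use the same notion of blow-up property via Definition~\ref{def:odegep}, are exactly the right ones and nothing further is required.
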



\section{A Distinguishing Example}

In this section we present an example of a function $f$ which satisfies the hypotheses of  Corollary~\ref{cor:BUPpde}, but not those of 
 \cite[Theorem 4.1]{KST} (for $\alpha =2$)   nor \cite[Theorem]{Sug}.  Firstly, the $f$ we construct below  has the property 
 \be
\liminf_{u\to 0}\frac{f(u)}{u^{\pc}}=0,
\label{liminf}
\ee 
so that the requirement (\ref{eqn:pc}) (labelled (F.2) in  \cite{Sug}) does not hold. 

In \cite[Theorem 4.1]{KST} the authors use a similar but more restrictive version of {\bf  (S)} (there denoted by assumption (B3)) to establish blow-up of the PDE
 (\ref{nhe}) when  $\alpha =2$. The authors assume that there exists  $c_0\in (0,1]$   such that
\be
f(\lam\mu )\ge c_0\mu^{p_F }f(\lam )\quad \text{for all}\quad 0<\lam\le \mu ,\quad  \lam\in (0,c_0 )\quad \text{and}\quad \lam\mu\in (0,c_0 ).\label{B3}
\ee
Hence the r\^ole  of $g$ in {\bf  (S)} is played there by the power law nonlinearity $g(\mu )=\mu^{p_F }$ (recall Remark~\ref{rem:BUP}). In fact the {\it homogeneity} of this power law function is crucial in  the proof of \cite[Theorem 2.1]{KST} (see also \cite[Theorem 3.5]{KST}) on which \cite[Theorem 4.1]{KST} relies.  Indeed, the iterative  blow-up procedure used in the proof of \cite[Theorem 2.1]{KST}  utilizes in an essential way certain scaling identities relating the exponential function in the Gaussian heat kernel $K_2$ and the power law.
Furthermore, {\bf  (S)} is not required to hold for arbitrarily small $\mu$ which is an essential requirement in the proof of \cite[Theorem 2.1]{KST}. On the other hand we impose the stronger convexity assumption in {\bf  (C)}, absent in \cite{KST}.

Now observe, upon taking $\lam =\mu$,  that any $f$ satisfying  (\ref{B3})   necessarily satisfies 
\be
\liminf_{\lam\to 0}\frac{f (\lam^2)}{\lam^{p_F} f(\lam)}>0.
\label{diag}
\ee
(Note that this in turn imposes upon $f$ a kind of logarithmic scaling bound, as emerges in the proof of \cite[Lemma 3.6]{KST}).  
The $f$ we construct below will satisfy  the hypotheses of  Corollary~\ref{cor:BUPpde} (for any $\alpha\in (0,2]$), but {\em not}  (\ref{diag}) (for $\alpha =2$).

Let $\alpha\in (0,2]$.


{\it Step 1}. 
 Define the monotonically decreasing sequences
\be
\sigma_{i}:=e^{-i^2},\qquad u_i:=e^{-e^{i^2}}, \qquad i\in \N .\label{seq}
\ee
Let $I_i$ denote the interval $I_i=[u_{i+1},u_i)$ and define 
$$\fp (u)=\sigma_{i}u^\pc , \qquad u\in I_i, \qquad i\in \N$$
with $\fp (0)=0$.   Notice that $\fp (u)=u^\pc/\ln (1/u_i)$ on $I_i$, so that $\fp$ models a  logarithmic correction to the critical power law case 
on a  sequence of vanishingly small intervals near zero, to be compared with the example of \cite[Section 5]{KST}.
 It is clear that $\fp $ is  non-decreasing on $[0,\delta ]$ (where $\delta >0$ can be chosen as small as desired later on) with discontinuities at $u=u_i$.

\vspace{1cm}
{\it Step 2}. We now modify $\fp$ to create a function $\fps$ satisfying {\bf  (M)}, {\bf  (C)}  and {\bf  (B)}.

Fix   $p$ and $\theta >1$ such that 
\be
1<p<\pc ,\qquad \frac{\pc}{\pc -1}<\theta <\frac{p}{p-1}\label{params}
\ee
and set $v_i=\theta u_{i+1}$. It is easily verified  that $\theta u_{i+1}<u_i$ for all $i$ sufficiently large, and so for all such $i$
$$ u_{i+1}<v_i<u_i.$$
Now set $J_i= [u_{i+1},v_i)$ and $M_i=[v_i,u_i)$ (so that $I_i$ is the disjoint union of $J_i$ and  $M_i$) and  define
\bs
\fps (u)=\left\{
\begin{array}{ll}
b_iu -a_i,  & u\in J_i, \\
\sigma_{i}u^\pc , & u\in M_i 
\end{array}\right.\es
for $i$ large, with $\fps (0)=0$.   Note that $f(v_i)=\sigma_{i}v_i^\pc$ so that (\ref{liminf}) holds.

We now choose $a_i$ and $b_i$ to ensure that $\fps$ is continuous, i.e. such that the line $y=b_iu -a_i$ passes through the points  $(u_{i+1},\sigma_{i+1}u_{i+1}^\pc )$ 
 and $(v_i,\sigma_{i}v_i^\pc )=(\theta u_{i+1},\sigma_{i}\theta^\pc   u_{i+1}^\pc )$. This yields
\be
 b_i=\frac{u_{i+1}^{\pc -1}(\theta^{\pc}\sigma_{i}-\sigma_{i+1})}{\theta -1}>0,\qquad a_i=\frac{u_{i+1}^{\pc }(\theta^{\pc}\sigma_{i}-\theta \sigma_{i+1})}{\theta -1}>0 .\label{anbn}\ee
 By construction $\fps$ is also increasing and Lipschitz on $[0,\delta ]$. In order that $\fps$ be convex on $[0,\delta ]$ we  require that
 \bs
 \pc \sigma_{i+1}u_{i+1}^{\pc -1}\le b_i\le \pc \sigma_{i}v_{i}^{\pc -1},
 \es
(by comparing the gradient of $f$ at the endpoints of the intervals), or equivalently
 \be
\theta^{\pc -1}( \theta -\pc ( \theta-1))\le  \frac{\sigma_{i+1}}{\sigma_{i}}\le \frac{\theta^\pc}{1+\pc (\theta^\pc -1)}.\label{convex}
 \ee
By  (\ref{params}), and since  $\sigma_{i+1}/\sigma_{i}\to 0$ as $i\to\infty$,  (\ref{convex}) holds for all $i$ sufficiently large.

Thus, $\fps$ is increasing, convex and Lipschitz on $[0,\delta ]$. It is clear that the domain of $\fps$ can then be extended to $[0,\infty )$ while still preserving
monotonicity, convexity and  Lipschitz continuity and also such that {\bf  (B)} holds.

\vspace{1cm}

{\it Step 3}. Next we show that $\fps$ satisfies the remaining  hypotheses of Corollary~\ref{cor:BUPpde}. By Remark~\ref{rem:BUP}(i) it suffices to show that 
\bi
\item[(i)] $\fps (u)/u^p $ is non-decreasing on $(0,\delta )$, and 
\item[(ii)] $\displaystyle{\int_{0^+}\frac{\fps (u)}{u^{2+\alpha /n}} \, \dee u=\infty} $. 
\ei

For (i) let $\fs (u):=\fps (u)/u^p $. This {\it continuous}, piecewise differentiable  function is given explicitly by
\bs
\fs (u)=\left\{
\begin{array}{ll}
b_iu^{1 -p }-a_iu^{-p } ,  & u\in J_i, \\
\sigma_{i}u^{\pc -p }, & u\in M_i. 
\end{array}\right.
\es
Clearly $\fs$ is non-decreasing  on $M_i$ for all $i$.  On $J_i= [u_{i+1},v_i)$ we  have that
\bs
\fs\, ' (u)=u^{-p-1}(pa_{i}-(p-1)b_iu).
\es 
Hence $\fs\, ' \ge 0$ on $J_i$ if and only if 
\be
pa_{i}\ge (p-1)b_iv_i.\label{deriv}
\ee
Now, recalling (\ref{anbn}), we have
\bs
\frac{pa_{i}}{ (p-1)b_iv_i}&=& \frac{p(\theta^{\pc}\sigma_{i}-\theta \sigma_{i+1})}{\theta (p-1)(\theta^{\pc}\sigma_{i}-\sigma_{i+1})}\\
&=&  \frac{p(\theta^{\pc}-\theta \sigma_{i+1}/\sigma_{i})}{\theta (p-1)(\theta^{\pc}-\sigma_{i+1}/\sigma_{i})}
\to   \frac{p}{\theta (p-1)}
\es
as $i\to\infty$. Hence, by (\ref{params}),  (\ref{deriv}) holds for all $i$ sufficiently large. Thus $\fs$ is non-decreasing on $(0,\delta )$.

For (ii),
\bs
\int_{0}^\delta \frac{\fps (u)}{u^{2+\alpha /n}} \, \dee u & \ge &  \sum_{i=1}^\infty\int_{M_i}\frac{\fps (u)}{u^{2+\alpha /n}}\, \dee u=
\sum_{i=1}^\infty\int_{v_i}^{u_i}\frac{\sigma_{i}}{u}\, \dee u\\
& = & \sum_{i=1}^\infty \sigma_{i}\log ({u_i}/{v_i})= \sum_{i=1}^\infty \sigma_{i}\log \left({u_i}/{(\theta u_{i+1}})\right)\\
& = & \sum_{i=1}^\infty (e^{2i+1}-1)- \log\theta\sum_{i=1}^\infty e^{-i^2} \\
& =& \infty ,
\es
recalling (\ref{seq}).

{\it Step 4}. Finally we show that $\fps$  fails to satisfy assumption \cite[(B3)]{KST} when $\alpha =2$. In fact we establish a more general result:  for any $\alpha\in (0,2]$,   
we find a sequence $\lam_i\to 0$ such that 
$$\lim_{i\to\infty}\frac{\fps (\lam_i^2)}{\lam_i^\pc \fps(\lam_i)}=0.$$
Consequently   (\ref{diag}) fails in the special case  $\alpha =2$.

To achieve this we show that there is a sequence $\lam_i\in M_i=[v_i,u_i)$ such that  $\lam_i^2\in M_{i+1}$. It will then follow that
$$\lim_{i\to\infty}\frac{\fps (\lam_i^2)}{\lam_i^\pc \fps(\lam_i)}=\lim_{i\to\infty}\frac{\sigma_{i+1}(\lam_i^2)^\pc }{\lam_i^\pc (\sigma_{i}\lam_i^\pc )}
=\lim_{i\to\infty}\frac{\sigma_{i+1}}{\sigma_{i}}=0,$$
recalling that $\sigma_{i}=e^{-i^2}$.

Fix $1/2<q<1$ and let $\lam_i=v_i^q$. Clearly $\lam_i> v_i$ since $v_i<1$ and $q<1$.  It is also easily verified that  $\lam_i=\theta^q u_{i+1}^q<u_i$ for $i$ sufficiently large, recalling (\ref{seq}). Hence  $\lam_i\in M_i$ for such $i$.   
Next, 
\bs
\lam_i^2=v_i^{2q}=\theta^{2q} u_{i+1}^{2q}<u_{i+1}
\es 
for $i$ sufficiently large, since $2q>1$. Also, $\lam_i^2=v_i^{2q}> u_{i+1}^{2q}$ and $v_{i+1}=\theta u_{i+2}$. Hence in order to show that 
 $\lam_i^2>v_{i+1}$, it suffices to show that  $u_{i+1}^{2q}>\theta u_{i+2}$. This is readily verified for large $i$, recalling (\ref{seq}).
  It follows that $\lam_i^2\in M_{i+1}$, as required.


\begin{remark}
Consider the case $\alpha =2$. By taking $g(\mu )=\mu ^{p_F}$, any function $f$ satisfying (\ref{B3}) necessarily satisfies our condition {\bf  (S)}. Hence any convex function $f$ satisfying 
 the hypotheses of \cite[Theorem 4.1]{KST} also satisfies those of Corollary~\ref{cor:BUPpde}. 
Our distinguishing example therefore shows that, {\it  within the class of convex source terms},  Corollary~\ref{cor:BUPpde} is {\it strictly} stronger than \cite[Theorem 4.1]{KST}.
\label{rem:stronger} 
\end{remark}

\begin{remark}
It is reasonable to speculate whether the analogous condition to (\ref{B3}), with the power law $\mu^{p_F}$ replaced by  $\mu^\pc$, might provide the basis
for similar results to those in  \cite{KST}   for the fractional diffusion case $0<\alpha <2$. However, the $f$ constructed above satisfies 
\bs
\liminf_{\lam\to 0}\frac{f (\lam^2)}{\lam^{\pc} f(\lam)}=0
\es
for any $\alpha \in (0,2]$. Consequently, the $f$ constructed above pre-empts any improvemts that might possibly be obtained in this way, at least within the class of { convex} source terms.
 \end{remark}



\section{Global Existence}

In this section we consider the issue of global continuation of  locally bounded  solutions of (\ref{nhe}). We set  $Q_T= \R^n\times (0,T)$ and   write $\| \cdot \|_q$ for the norm in $L^q(\R^n  )$.

\begin{definition}
Let $T>0$. We say that a non-negative, measurable, finite almost everywhere function $w\colon Q_T\to\R$ is an integral supersolution  of (\ref{nhe}) on $Q_T$ if $w$ satisfies ${\mathscr F}(w;\phi)\le w$ almost everywhere in $Q_T$, with $\F$  as in (\ref{eq:VoC}).
\label{def:super}
\end{definition}



We recall the following well-known smoothing estimate for the fractional heat semigroup for $1\le q\le r\le\infty$ and $\phi\in L^q(\R^n )$ (see e.g., \cite[Lemma 3.1]{MYZ}):
\be\label{eq:smoothing}
  \left\|S_{\alpha}(t)\phi \right\|_{r}\le Ct^{-\frac{n}{\alpha}\left(\frac{1}{q} -\frac{1}{r} \right)}\|\phi\|_q,\qquad t>0,
 \ee
where $C=C(n,\A ,q,r)$.


For $f$ satisfying {\bf  (M)} we define the  non-decreasing function  $\ell \colon (0,\infty )\to [0,\infty )$ by
\be
\ell (u)=\sup_{ 0<s\le u}\frac{f(s)}{s}.
\label{eq:ell}
\ee

\begin{theorem}
Suppose $\phi\in L^1(\R^n  )\cap L^\infty(\R^n  )$,  $\phi\ge 0$ and $f$ satisfies {\bf  (M)}.  Let  $u(t;\phi)$ denote  the unique, non-negative solution
 of (\ref{nhe}) with maximal interval of existence $[0,T_{\phi})$. If
\be
\int_{0^+}u^{-\pc }\ell (u)\, \dee  u <\infty , \label{B2fin}
\ee
then  there exists  $\rho > 0$   such that for all  $\phi$ satisfying
$\|\phi\|_1+\|\phi\|_\infty \le \rho $ we have $T_{\phi}=\infty$ and
\be
0\le u(t;\phi)\le 2S_{\alpha}(t)\phi \label{eq:globalorder}
\ee
for all $t\ge 0$.
Consequently $\|u(t;\phi)\|_\infty\le 2 Ct^{-n/\alpha }\|\phi\|_1$ for all $t>0$, where $C=C(n,\A ,1 ,\infty)$.
\label{thm:global}
\end{theorem}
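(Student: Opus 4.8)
The plan is to prove global existence for small data via a contraction/monotone iteration argument built on the variation of constants formula \eqref{eq:VoC}, using the smoothing estimate \eqref{eq:smoothing} to control the nonlinear term. The natural strategy is to show that under the smallness condition $\|\phi\|_1+\|\phi\|_\infty\le\rho$, the function $\upperu(t):=2S_{\alpha}(t)\phi$ is an integral supersolution in the sense of Definition~\ref{def:super}; once this is established, a standard comparison/monotonicity argument (using that $f$ is non-decreasing by {\bf (M)} and that $\F$ is order-preserving) forces $0\le u(t;\phi)\le \upperu(t)$ on the maximal existence interval, and since $\upperu$ is globally bounded the blow-up alternative rules out $T_\phi<\infty$, giving $T_\phi=\infty$.

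First I would verify the supersolution inequality $\F(\upperu;\phi)\le\upperu$. Writing it out, this amounts to bounding
\bs
S_{\alpha}(t)\phi+\int_0^t S_{\alpha}(t-s)f\bigl(2S_{\alpha}(s)\phi\bigr)\,\dee s\le 2S_{\alpha}(t)\phi,
\es
so it suffices to control the Duhamel integral by $S_{\alpha}(t)\phi$. Using $f(w)=w\cdot\bigl(f(w)/w\bigr)\le w\,\ell(w)$ with $\ell$ as in \eqref{eq:ell}, and $\ell$ non-decreasing, I would estimate $f\bigl(2S_{\alpha}(s)\phi\bigr)\le 2S_{\alpha}(s)\phi\cdot\ell\bigl(2\|S_{\alpha}(s)\phi\|_\infty\bigr)$ pointwise. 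The decay $\|S_{\alpha}(s)\phi\|_\infty\le Cs^{-n/\alpha}\|\phi\|_1$ from \eqref{eq:smoothing} (with $q=1$, $r=\infty$) then converts the $s$-integral into an integral in the variable $u\sim s^{-n/\alpha}$, and the change of variables $u=Cs^{-n/\alpha}\|\phi\|_1$ is exactly what turns $\int_0^t s^{-(\cdot)}\ell(\cdots)\,\dee s$ into $\int_{0^+}u^{-\pc}\ell(u)\,\dee u$ (after accounting for the Jacobian and the exponent $\pc=1+\alpha/n$); here the hypothesis \eqref{B2fin} guarantees finiteness, and crucially the bound is proportional to a positive power of $\rho$, so choosing $\rho$ small makes the whole Duhamel term dominated by $S_{\alpha}(t)\phi$, establishing the supersolution property uniformly in $t$.

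The main obstacle I anticipate is organizing the kernel/semigroup manipulation so that the $L^\infty$ pointwise ordering survives the Duhamel integration: one must keep track of both the $L^1$–$L^\infty$ smoothing (to get the decay rate feeding the integral test \eqref{B2fin}) and a pointwise/order comparison of $S_\alpha(t-s)$ applied to the nonlinearity against $S_\alpha(t)\phi$ itself. A clean route is to factor out a single power of $S_{\alpha}(t)\phi$ using the semigroup and positivity of the kernel, reducing the supersolution inequality to a scalar smallness bound on $\int_0^t \ell\bigl(2\|S_\alpha(s)\phi\|_\infty\bigr)\,\dee s$ that is controlled by \eqref{B2fin} with a prefactor tending to $0$ as $\rho\to0$. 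Once $0\le u\le 2S_{\alpha}(t)\phi$ holds globally, the final consequence $\|u(t;\phi)\|_\infty\le 2Ct^{-n/\alpha}\|\phi\|_1$ is immediate by applying \eqref{eq:smoothing} with $q=1$, $r=\infty$ to the right-hand side.
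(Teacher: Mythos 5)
Your overall architecture is exactly the paper's: show $w:=2S_{\alpha}(t)\phi$ is an integral supersolution, pass to the solution by monotone iteration plus uniqueness, and get the decay bound from $L^1$--$L^\infty$ smoothing. But the central estimate, as you describe it, fails. The substitution $u=2C\rho s^{-n/\alpha}$ sends $s\to 0^+$ to $u\to\infty$, not to $u\to 0^+$: if you insert the $L^1$--$L^\infty$ decay bound into $\ell$ over the \emph{whole} interval $(0,t)$, the Duhamel term is controlled by $\int_{2C\rho t^{-n/\alpha}}^{\infty}u^{-\pc}\ell(u)\,\dee u$, whose convergence at infinity is not given by (\ref{B2fin}), which constrains $\ell$ only near zero. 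This is not a cosmetic issue. Take the canonical case $f(u)=u^{p}$ with $p>\pc$ (so (\ref{B2fin}) holds, and global existence for small data is precisely what the theorem must deliver): then $\ell(u)=u^{p-1}$ and your bound reads $\int_0^t(2C\rho)^{p-1}s^{-n(p-1)/\alpha}\,\dee s=\infty$ for every $t>0$, since $n(p-1)/\alpha>1$ exactly when $p>\pc$. So the single change of variables you assert, turning the $s$-integral into $\int_{0^+}u^{-\pc}\ell(u)\,\dee u$, reverses the endpoints and the estimate collapses; the divergence is an artefact of using the bound $\|S_{\alpha}(s)\phi\|_\infty\le C s^{-n/\alpha}\|\phi\|_1$ where it degenerates, near $s=0$.

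The missing idea is the two-regime splitting the paper performs. Fix $\tau>0$ with $2\ell(2)\tau<1$ and take $\rho$ so small that $C_1\rho\le 1$, where $C_1=C(n,\A,\infty,\infty)$. On $(0,\tau]$ one uses the non-degenerate $L^\infty$--$L^\infty$ bound $\|S_{\alpha}(s)\phi\|_\infty\le C_1\rho\le 1$, so $\ell(2S_{\alpha}(s)\phi)\le\ell(2)$ and that portion of the Duhamel term is at most $2\ell(2)\tau\,S_{\alpha}(t)\phi$, absorbed by the factor $(2\ell(2)\tau-1)<0$. Only on $[\tau,t]$ does one invoke the decay $\|S_{\alpha}(s)\phi\|_\infty\le C_2\rho s^{-n/\alpha}$; the change of variables then runs over $[2C_2\rho t^{-n/\alpha},\,2C_2\rho\tau^{-n/\alpha}]$, a range bounded away from infinity, so (\ref{B2fin}) (together with local boundedness of $\ell$) applies and yields the prefactor $(\alpha/n)2^{\pc}(C_2\rho)^{\alpha/n}$, which tends to $0$ as $\rho\to 0$, uniformly in $t$. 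Your final paragraph, which keeps $\int_0^t\ell\left(2\|S_{\alpha}(s)\phi\|_\infty\right)\dee s$ intact, is salvageable precisely by using the minimum of the two smoothing bounds --- that is the paper's argument in disguise --- but as written your proposal claims the wrong limits of integration, and the step as stated would fail for the most basic examples covered by the theorem. The remaining ingredients (order-preservation of $\F$, iteration $u_{n+1}=\F(u_n;\phi)$ decreasing below $w$, uniqueness, and the final bound $\|u(t;\phi)\|_\infty\le 2Ct^{-n/\alpha}\|\phi\|_1$ via (\ref{eq:smoothing})) are fine and match the paper.
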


\begin{proof}  We will show that for  suitably small $\rho >0$,  $w\coloneqq 2S_{\alpha}(t)\phi $ is   an integral supersolution  of (\ref{nhe}) for all $t\ge 0$. Via
the  monotone iteration scheme $u_{n+1}=\F (u_n;\phi )$ we then obtain a decreasing sequence of  functions $u_n$ such that $0\le u_n\le w$ and converging to a solution 
$\tilde{u}(t;\phi)$ of (\ref{nhe}). See, for example,    \cite{LRSV,RS2} for the case $\alpha =2$ and  \cite{Li} for the fractional case for general results of this kind.
 By standard uniqueness results 
we may then  deduce that $\tilde{u}(t;\phi)=u(t;\phi)$ and $0\le u(t;\phi)\le w$, yielding (\ref{eq:globalorder}). The $L^\infty$-bound for $u$ then follows by   $L^1$-$L^\infty$ smoothing.

First set $C_1=C(n,\A ,\infty ,\infty)$ and choose $\rho$ such that  $\rho C_1 \le 1$. Then  choose  $\tau >0$ such that $2\ell (2)\tau <1$. By (\ref{eq:smoothing}) 
 we have $\|S_{\alpha}(t)\phi\|_\infty\le C_1\|\phi\|_\infty\le C_1\rho \le 1$ for all $t> 0$. In particular, for all $t\in (0,\tau ]$ we have
\bs
{\mathscr F}(w;\phi )-w & = & S_{\alpha}(t)\phi+\int_0^t S_{\alpha}(t-s) f(w(s))\,\dee s-w\\
&\le &S_{\alpha}(t)\phi +\int_0^t S_{\alpha}(t-s) \left[\ell (w(s))w(s)\right]\,\dee s-w\\
&=& \int_0^t S_{\alpha}(t-s)\left[\ell \left(2S_{\alpha}(s)\phi \right)2S_{\alpha}(s)\phi \right]\,\dee s -S_{\alpha}(t)\phi \\
&\le & \int_0^t S_{\alpha}(t-s)\left[\ell \left(\|2S_{\alpha}(s)\phi \|_{\infty}\right)2S_{\alpha}(s)\phi \right]\,\dee s -S_{\alpha}(t)\phi \\
&\le & \int_0^t S_{\alpha}(t-s)\left[2\ell (2)S_{\alpha}(s)\phi \right]\,\dee s-S_{\alpha}(t)\phi \\
&=& 2\ell (2)\int_0^t S_{\alpha}(t)\phi \,\dee s-S_{\alpha}(t)\phi \\
&\le &\left(2\ell (2)\tau -1 \right)S_{\alpha}(t)\phi \\
&\le & 0.
\es
For  $t> \tau $ we proceed as above, utilizing the  $L^1$-$L^\infty$ smoothing estimate $\|S_{\alpha}(t)\phi \|_\infty\le C_2t^{-n/\alpha }\|\phi \|_1\le C_2\rho t^{-n/\alpha }$, where
$C_2:=C(n,\A ,1 ,\infty)$. Whence, 
\bs
&&{\mathscr F}(w;\phi )-w= S_{\alpha}(t)\phi+\int_0^\tau  S_{\alpha}(t-s) f(w(s))\,\dee s+\int_\tau ^t S_{\alpha}(t-s) f(w(s))\,\dee s-w\\
&\le&\left(2\ell (2)\tau -1\right)S_{\alpha}(t)\phi +\int_\tau ^t S_{\alpha}(t-s) \left[\ell (w(s))w(s)\right]\,\dee s\\
&\le& \left(2\ell (2)\tau -1\right)S_{\alpha}(t)\phi +\int_\tau ^t S_{\alpha}(t-s)\left[\ell \left(2S_{\alpha}(s)\phi \right)2S_{\alpha}(s)\phi \right]\,\dee s\\
&\le& \left(2\ell (2)\tau -1\right)S_{\alpha}(t)\phi +\int_\tau ^t S_{\alpha}(t-s)\left[\ell \left(\|2S_{\alpha}(s)\phi \|_{\infty}\right)2S_{\alpha}(s)\phi \right]\,\dee s\\
&\le& \left(2\ell (2)\tau -1\right)S_{\alpha}(t)\phi +\int_\tau ^t S_{\alpha}(t-s)\left[\ell (2C_2\rho s^{-n/\alpha })2S_{\alpha}(s)\phi \right]\,\dee s\\
&=& \left(2\ell (2)\tau -1\right)S_{\alpha}(t)\phi +2S_{\alpha}(t)\phi \int_\tau ^t \ell (2C_2\rho  s^{-n/\alpha })\,\dee s\\
&=& \left(2\ell (2)\tau -1+({\alpha}/n)2^{\pc }(C_2\rho )^{\alpha /n}
\int_{2C_2\rho t^{-n/\alpha }}^{2C_2\rho \tau ^{-n/\alpha }} x^{-\pc }\ell (x)\,\dee x\right)S_{\alpha}(t)\phi \\
&\le& \left(2\ell (2)\tau -1+({\alpha}/n)2^{\pc }(C_2\rho )^{\alpha /n}\int_{0}^{2C_2\rho \tau ^{-n/\alpha }} x^{-\pc }\ell (x)\,\dee x\right)S_{\alpha}(t)\phi \\
& \le & 0
\es
for $\rho $ sufficiently small (and independently of $t$), by (\ref{B2fin}).
\end{proof}

We are now in a position to state our main  result.

\begin{theorem}[Blow-up Dichotomy] If $f$ satisfies {\bf  (M)},  {\bf  (C)}, {\bf  (B)} and {\bf  (S)}, then the following are equivalent:
\begin{enumerate}
  \item[(a)] the PDE (\ref{nhe}) has the blow-up property;
  \item[(b)] the ODE (\ref{ODE})  has the blow-up property;
  \item[(c)] $\displaystyle{\int_{0^+}\frac{f(u)}{u^{2+\alpha /n}}\, \dee  u =\infty .}$
\end{enumerate}
\label{thm:dich}
\end{theorem}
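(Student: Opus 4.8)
The plan is to prove the three statements equivalent by closing a cycle of implications $(c)\Rightarrow(b)\Rightarrow(a)\Rightarrow(c)$, each arrow being supplied by a result already established in the paper.

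First, $(c)\Rightarrow(b)$ is precisely Theorem~\ref{thm:BUPode}: under \textbf{(M)}, \textbf{(C)}, \textbf{(B)} and \textbf{(S)}, divergence of $\int_{0^+}f(u)u^{-2-\alpha/n}\,\dee u$ forces the ODE (\ref{ODE}) to have the blow-up property. Next, $(b)\Rightarrow(a)$ is Theorem~\ref{thm:blowup}, which needs only \textbf{(M)} and \textbf{(C)} and states that blow-up of the ODE transfers to blow-up of the PDE (\ref{nhe}). The composite $(c)\Rightarrow(a)$ is of course Corollary~\ref{cor:BUPpde}, but it is cleaner here to keep the two arrows separate so that the cycle is visible.

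The remaining arrow $(a)\Rightarrow(c)$ I would prove in contrapositive form, $\neg(c)\Rightarrow\neg(a)$, using the global-existence Theorem~\ref{thm:global}. The key bridging observation is that, for $f$ satisfying \textbf{(M)} and \textbf{(C)}, the non-decreasing envelope $\ell$ defined in (\ref{eq:ell}) coincides with $L$: since convexity together with $f(0)=0$ makes $s\mapsto f(s)/s$ non-decreasing, the supremum in $\ell(u)=\sup_{0<s\le u}f(s)/s$ is attained at $s=u$, so $\ell(u)=f(u)/u$. Consequently
\[
u^{-\pc}\ell(u)=u^{-1-\alpha/n}\,\frac{f(u)}{u}=\frac{f(u)}{u^{2+\alpha/n}},
\]
and finiteness of $\int_{0^+}f(u)u^{-2-\alpha/n}\,\dee u$ (that is, the failure of $(c)$) is exactly hypothesis (\ref{B2fin}) of Theorem~\ref{thm:global}. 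That theorem then produces, for every sufficiently small non-negative $\phi\in L^1(\R^n)\cap L^\infty(\R^n)$, a global solution of (\ref{nhe}). Choosing any single non-trivial such $\phi$ (e.g.\ a small bump function) yields a non-trivial, non-negative datum with $T_\phi=\infty$, contradicting the blow-up property of Definition~\ref{def:pdegep}; hence $\neg(a)$.

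The substance of the argument lies entirely in the three cited theorems, so the only genuine work here is the identity $\ell(u)=f(u)/u$ under convexity, which reconciles the integrand of the global-existence criterion with that of condition $(c)$; I expect this to be the sole point requiring care, together with the elementary remark that exhibiting a single global positive solution suffices to negate the blow-up property. With the cycle $(c)\Rightarrow(b)\Rightarrow(a)\Rightarrow(c)$ thereby closed, all three statements are equivalent.
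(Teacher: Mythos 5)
Your proposal is correct and follows exactly the paper's own proof: the cycle $(c)\Rightarrow(b)$ via Theorem~\ref{thm:BUPode}, $(b)\Rightarrow(a)$ via Theorem~\ref{thm:blowup}, and $(a)\Rightarrow(c)$ as the contrapositive of Theorem~\ref{thm:global}, using the identity $\ell(u)=f(u)/u$ for convex $f$ to match (\ref{B2fin}) with the failure of condition (c). Your spelling-out of why convexity and $f(0)=0$ force $s\mapsto f(s)/s$ to be non-decreasing (so the supremum in (\ref{eq:ell}) is attained at $s=u$) is precisely the detail the paper compresses into its parenthetical remark.
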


\begin{proof}
By the contrapositive of Theorem~\ref{thm:global}, (a) implies (c) (noting that $\ell (u)=f(u)/u$ for $f$ convex).
By Theorem~\ref{thm:BUPode}, (c) implies (b). 
By Theorem~\ref{thm:blowup}, (b) implies (a).
\end{proof}

\begin{example}
In the special case of the homogeneous power law nonlinearity
$f(u)=u^p$ with $p>1$, Theorem~\ref{thm:dich} shows that (\ref{nhe}) has the blow-up property if and only if $1<p\le \pc $, as is well known  \cite{Fuj66,Fuj70,Hay,Sug}.
\label{eg:1}
\end{example}


\begin{example}
For $\B >0$ take
\bs
f(u)=\left\{
\begin{array}{ll}
u^{\pc }\left(\log (1/u)\right)^{-\B}, & u\in (0,c_0 ),\\
\hat{f}(u), &  u\ge c_0,
\end{array}
\right.
\es
with $f(0)=0$ and $\hat{f}$ and $c_0>0$ to be specified. For  sufficiently small $c_0 >0$, $f$ can be shown to be convex (via a tedious calculation) and $f(u)/u^{p}$ 
is non-decreasing on $ (0,c_0 )$  for any $p \in (1,\pc ]$. By Remark~\ref{rem:BUP} (i) it follows that $f$ satisfies {\bf  (S)}.
Clearly $\hat{f}$ can then be chosen such that {\bf  (M)}, {\bf  (B)} and {\bf  (C)} hold.   Computation of the integral in Theorem~\ref{thm:dich} (c) then shows that (\ref{nhe}) 
has the blow-up property if and only if  $0<\B \le 1$. See the example in \cite[Section 5]{KST} for the special case  $\alpha =2$.
\label{eg:logzero}
\end{example}

\begin{remark}
Under the assumptions of Theorem~\ref{thm:dich} we could rewrite the equivalences as follows:
\begin{enumerate}
  \item[(a)] there exist positive, global solutions of  the PDE (\ref{nhe});
  \item[(b)] there exist positive, global solutions of  the ODE (\ref{ODE});
  \item[(c)] $\displaystyle{\int_{0^+}\frac{f(u)}{u^{2+\alpha /n}}\, \dee  u <\infty .}$
\end{enumerate}
\label{thm:globaldich}

\end{remark}



\section{Concluding remarks}

We have established  a new blow-up dichotomy for positive  solutions of fractional semilinear heat equations, extending
those of  \cite{Fuj66, Hay, KST, Sug}. In particular, for a class of convex nonlinearities we have established  an equivalence between  the PDE (\ref{nhe}) and the  ODE (\ref{ODE})
with respect to the blow-up property. Furthermore  we have determined necessary and sufficient conditions on the nonlinearity $f$, in the form of a non-integrability condition near zero,
 for this blow-up property to hold.

 When viewed in their integral form, there is an obvious {\em formal} similarity between the PDE  and an auxiliary ODE:  the PDE  (\ref{nhe}) 
can be cast as
\bs u(t)=S_{\alpha}(t)\phi+\int_0^t S_{\alpha}(t-s)f(u(s))\, \dee  s,\es
while the ODE (\ref{ODE}) can be written as 
\bs x(t) =(t/t_0)^{-n/\alpha }x(t_0) +\int_{t_0}^t   \left(t/s\right)^{-n/\alpha }f(x(s))\,\dee s.\es
The similarity arises 
 when considering   the decay rate of the  operator norm of $S_{\alpha}(t):L^1(\R^n )\to L^{\infty}(\R^n )$, which is given (via the smoothing estimate (\ref{eq:smoothing}))  by
\bs\| S_{\alpha}(t)\|_{L^1\to L^{\infty}}\le Ct^{-n/\alpha} .\es
It is  intriguing that this formal similarity manifests itself as an equivalence with respect to the blow-up property.

 It would be interesting to know whether the technical hypothesis {\bf  (S)} can be removed in Theorem~\ref{thm:BUPode} and consequently in Theorem~\ref{thm:dich}.
This would yield a sharper and perhaps  more natural result.  However, recalling Remark~\ref{rem:BUP} (i), we suspect that the stronger (but more easily verified) assumption that $f(u)/u^p$ be non-decreasing  near zero for some $p>1$, will prove more useful in applications.
 Similarly we would like to better understand the r\^ole of the convexity assumption on $f$. It is this convexity that permits us to show, via Jensen's inequality, that blow-up of the ODE implies blow-up of the PDE.  It remains open whether our blow-up equivalence result can be obtained without the convexity assumption and without assuming  {\bf  (S)}.
 
 Finally, we mention  that the analysis of fractional semilinear parabolic  equations such as  (\ref{nhe}) is intimately related to the study of symmetric $\alpha$-stable processes in probability theory. It seems reasonable to hope that   our work might have parallels in that domain  and provide new insights for such processes.

 \section*{Acknowledgements}
   RL was  partially supported by the QJMAM Fund for Applied Mathematics. MS was  partially supported by NCN grant 2017/26/D/ST1/00614.

\bibliographystyle{model1num-names}
\bibliography{<your-bib-database>}

\begin{thebibliography}{00}


\bibitem{BaC}
P.~Baras and L.~Cohen.
\newblock Complete blow-up after $T_{{\rm max}}$ for the solution of a semilinear heat equation.
\newblock{  J. Funct. Anal.}, 71 (1987), 142--174.


\bibitem{BLW}
M.~Birkner, J.A.~L\'{o}pez-Mimbela and A.~Wakolbinger.
\newblock  Comparison results and steady states for the Fujita equation with fractional Laplacian.
\newblock {  Ann. Inst. H. Poincar\'{e} Anal. Non Lin\'{e}aire}, 22 (2005),   83--97.


\bibitem{Fuj66}
H.~Fujita.
\newblock On the blowing up of solutions of the Cauchy problem for $u_{t}=\Delta u+u^{1+\alpha}$.
\newblock {  J. Sci. Univ. Tokyo}, Sect. I,  13 (1966), 109--124.

\bibitem{Fuj70}
H.~Fujita.
\newblock On some nonexistence and nonuniqueness theorems for nonlinear parabolic equations.
\newblock {  Proc. Symp. Pure Math.}, XVIII  (1970), 105--113.
\newblock Nonlinear Functional Analysis, Part 1 (Ed. F.E. Browder).
\newblock AMS.


\bibitem{FW}
H.~Fujita and S.~Watanabe.
\newblock On the uniqueness and non-uniqueness of solutions of initial value problems for some quasi-linear parabolic equations.
\newblock {  Comm. Pure Appl. Math.}, 21 (1968), 631--652.


\bibitem{Hay}
K.~Hayakawa.
\newblock On nonexistence of global solutions of some semilinear parabolic differential equations.
\newblock {  Proc. Japan Acad.}, 49 (1973), 503--505.

\bibitem{HKN}
N.~Hayashi, E. I.~Kaikina and P. I.~Naumkin.
\newblock Asymptotics for fractional nonlinear heat equations.
\newblock J. London Math. Soc. 72 (2005), 663–688.

\bibitem{IKK}
K.~Ishige, T.~Kawakami and K.~Kobayashi.
\newblock Asymptotics for a nonlinear integral equation with a generalized heat kernel.
\newblock {  J. Evol. Equ.},  14 (2014), 749–-777.



\bibitem{KST}
K.~Kobayashi, T.~Sirao and H.~Tanaka.
\newblock On the growing up problem for semilinear heat equations.
\newblock {  J. Math. Soc. Japan},  29 (3) (1977),  407--425.


\bibitem{LRSV}
R.~Laister, J.C.~Robinson, M.~Sier{\.z}{\polhk{e}}ga and A.~Vidal-L\'{o}pez.
\newblock  A complete characterisation of local existence for semilinear parabolic equations in Lebesgue spaces.
\newblock {  Ann. Inst. H. Poincar\'{e} Anal. Non Lin\'{e}aire}, 33 (6) (2016),   1519--1538.



\bibitem{Li}
K.~Li.
\newblock A characteristic of local existence for nonlinear fractional
heat equations in Lebesgue spaces.
\newblock { Comput. Math. Appl.}, 73 (2017), 653--665.


\bibitem{MQ} 
R.~Meneses and  A.~Quaas.
\newblock Fujita type exponent for fully nonlinear parabolic equations and existence results.
\newblock {  J. Math. Anal. Appl.}, 376  (2011), 514--527.

\bibitem{MYZ}
C.~Miao, B.~Yuan and B.~Zhang.
 \newblock Well-posedness of the Cauchy problem for the fractional power dissipative equations.
 \newblock Nonlinear Anal., 68 (2008), 461–484.

\bibitem{NS}
M.~Nagasawa and T.~Sirao. 
 \newblock Probabilistic treatment of the blowing up of solutions for a nonlinear integral equation.
 \newblock Trans. Amer. Math. Soc., 139 (1969), 301--310.

\bibitem{QS2}
P.~Quittner and P.~Souplet.
\newblock {  Superlinear Parabolic Problems. Blow-up, Global Existence and
  Steady States} (2nd Edition).
\newblock Birkh\"auser Advanced Texts, Basel, 2019.


\bibitem{RS2}
J.C.~Robinson and M.~Sier{\.z}{\polhk{e}}ga.
\newblock Supersolutions for a class of semilinear heat equations.
\newblock {  Rev. Mat. Complut.}, 26 (2013), 341--360.

\bibitem{Sug}
S.~Sugitani.
\newblock On nonexistence of global solutions for some nonlinear integral equations.
\newblock {  Osaka J. Math.}, 12 (1975), 45--51.



\end{thebibliography}

\end{document}